\documentclass{gtart_a}
\pdfoutput=1

\usepackage{pinlabel}
\usepackage{enumerate}

%%% Start of metadata

\title{Existence of ruled wrappings in hyperbolic $3$--manifolds}

\author{Teruhiko Soma}
\givenname{Teruhiko}
\surname{Soma}
\address{Department of Mathematics and Information Sciences\\
Tokyo Metropolitan University\\\newline
Minami-Ohsawa 1-1, Hachioji\\
Tokyo 192-0397\\Japan}
\email{tsoma@center.tmu.ac.jp}
\urladdr{}

\volumenumber{10}
\issuenumber{}
\publicationyear{2006}
\papernumber{28}
\lognumber{0495}
\startpage{1173}
\endpage{1184}

\doi{}
\MR{}
\Zbl{}

\arxivreference{}  %%% not in arXiv

\keyword{hyperbolic $3$--manifolds}
\keyword{ruled wrappings} 
\keyword{Marden's tameness conjecture}
\subject{primary}{msc2000}{57M50}
\subject{secondary}{msc2000}{30F40}

\received{15 September 2004}
\revised{26 June 2006}
\accepted{08 July 2006}
\published{12 September 2006}
\publishedonline{12 September 2006}
\proposed{Jean-Pierre Otal}
\seconded{David Gabai, Tobias Colding} 
\corresponding{}
\editor{David Gabai}
\version{}

%%% End of metadata

%%%% Editorial macros (delete if unused)

\AtBeginDocument{\let\bar\wbar\let\tilde\wtilde\let\hat\what}
\def\S{Section }
\makeop{Int}

%%% Start of user-defined macros %%%

\makeatletter
\def\cnewtheorem#1[#2]#3{\newtheorem{#1}{#3}[section]
\expandafter\let\csname c@#1\endcsname\c@theorem}

  %  adjust to get page length right

\newtheorem{theorem}{Theorem}[section]
\cnewtheorem{cor}[theorem]{Corollary}
\cnewtheorem{lemma}[theorem]{Lemma}
\cnewtheorem{prop}[theorem]{Proposition}

\theoremstyle{definition}
\cnewtheorem{definition}[theorem]{Definition}
\cnewtheorem{remark}[theorem]{Remark}

\makeatother  %  move after \newtheorem block

\numberwithin{equation}{section}

%%% End of user-defined macros %%%

\begin{document}

\begin{asciiabstract}
We present a short elementary proof of an existence theorem of certain
CAT(-1)-surfaces in open hyperbolic 3-manifolds. The main
construction lemma in Calegari and Gabai's proof of Marden's Tameness
Conjecture can be replaced by an applicable version of our theorem.
\end{asciiabstract}

\begin{htmlabstract}
We present a short elementary proof of an existence theorem of certain
CAT(-1)&ndash;surfaces in open hyperbolic 3&ndash;manifolds.  The
main construction lemma in Calegari and Gabai's proof of Marden's
Tameness Conjecture can be replaced by an applicable version of our
theorem.  Finally, we will give a short proof of the conjecture along
their ideas.
\end{htmlabstract}

\begin{abstract}
We present a short elementary proof of an existence theorem of certain
$\mathrm{CAT}(-1)$--surfaces in open hyperbolic $3$--manifolds.  The
main construction lemma in Calegari and Gabai's proof of Marden's
Tameness Conjecture can be replaced by an applicable version of our
theorem.  Finally, we will give a short proof of the conjecture along
their ideas.
\end{abstract}

\maketitle

Agol \cite{ag} and Calegari and Gabai \cite{cg} proved independently that any hyperbolic $3$--manifold $M$ with finitely 
generated fundamental group is homeomorphic to the interior of a compact $3$--manifold.
This is the affirmative answer to Marden's Tameness Conjecture in \cite{ma}.
Subsequently, Choi \cite{cho} gave another proof of the conjecture, similar to Agol's, in the case when $M$ has no 
parabolic cusps.

We are here interested in arguments in \cite{cg}, where the notion of
``shrinkwrapping" was introduced.  Shrinkwrappings play an important
role in their proof.  For the proof of the existence of
shrinkwrappings and that of their $\mathrm{CAT}(-1)$--property,
Calegari and Gabai used very deep and rarefied arguments, which some
readers, including the author, may find difficult to approach.  This
paper is intended to give an elementary proof of part of their proof
by using ruled wrappings instead of shrinkwrappings.

For simplicity, we only consider the case when a hyperbolic $3$--manifold has no parabolic cusps and will prove the 
following theorem.

\begin{theorem}\label{t_1}
Let $N$ be an orientable hyperbolic $3$--manifold without parabolic cusps, $\Delta$ a disjoint union of finitely 
many simple closed geodesics in $N$, and $f\co \Sigma\longrightarrow N$ a $2$--incompressible map rel.\ $\Delta$ from 
a closed orientable surface $\Sigma$ of genus greater than $1$ to $N\setminus \Delta$.
Then there exists a homotopy $F\co \Sigma\times [0,1]\longrightarrow N$ satisfying the following conditions.
\begin{enumerate}[\rm (i)]
\item
$F(x,0)=f(x)$ for any $x\in \Sigma$.
\item
$F(\Sigma\times [0,1))\cap \Delta=\emptyset$.
\item
The map $g\co \Sigma\longrightarrow N$ defined by $g(x)=F(x,1)$ is a $\mathrm{CAT}(-1)$--piecewise ruled map.
\end{enumerate}
\end{theorem}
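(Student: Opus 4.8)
The plan is to produce $g$ as an area minimizer among piecewise ruled maps homotopic to $f$ through maps missing $\Delta$, and then to read off the $\mathrm{CAT}(-1)$ property from ruledness together with first-order conditions at the singular locus. First I would fix a triangulation $\tau$ of $\Sigma$ and call $g\co\Sigma\to N$ \emph{piecewise ruled} if it sends each edge of $\tau$ to a geodesic arc and each $2$--simplex to a ruled triangle, that is, a triangle swept out by the geodesics joining one vertex to the points of the opposite edge. Away from the $1$--skeleton such a map already has Gauss curvature at most $-1$, so the whole problem is to control the singularities. Let $\mathcal R$ be the set of piecewise ruled maps homotopic to $f$ by a homotopy whose image meets $\Delta$ only at time $1$, and put $a=\inf_{g\in\mathcal R}\mathrm{Area}(g)$. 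Choosing $\tau$ fine enough that $f$ carries each simplex into a ball disjoint from $\Delta$, and then straightening edges and ruling faces inside these balls, produces a map $g_0\in\mathcal R$ together with a homotopy from $f$ that misses $\Delta$ entirely; thus $\mathcal R\neq\emptyset$.

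Next I would extract a minimizer. Because $N$ has no cusps, a monotonicity-type area estimate shows that an area bound together with the fixed homotopy class confines a minimizing sequence $g_n$ to a compact region, for otherwise the surfaces would have to escape every compact set and their areas would blow up. A ruled triangle is determined by the positions of its vertices and the homotopy classes of its edges, so each $g_n$ is encoded by finitely many points of $N$ and discrete combinatorial data; passing to a subsequence, the vertices converge and the combinatorial data stabilize, and the limit $g$ is again piecewise ruled with respect to $\tau$ and attains $a$. By construction $g$ comes with a homotopy $F$ from $f$ meeting $\Delta$ only at time $1$, which is (i) and (ii); the surface $g$ is permitted to touch $\Delta$, and this contact is the ``wrapping''. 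Where a face presses against $\Delta$ its ruling geodesics are forced to terminate on $\Delta$, subdividing that face along arcs of $\Delta$ into finitely many genuinely ruled pieces, so a refinement of $\tau$ still exhibits $g$ as piecewise ruled.

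It remains to check that the singular metric induced on $\Sigma$ is $\mathrm{CAT}(-1)$. By the usual cone-angle criterion it is enough that each face have curvature at most $-1$, which holds by ruledness, and that every cone point have cone angle at least $2\pi$. At an interior vertex of $\tau$ a cone angle below $2\pi$ could be eliminated by displacing the vertex to decrease area, contradicting minimality, so all such angles are at least $2\pi$. Along an arc where $g(\Sigma)$ meets a component $\delta$ of $\Delta$, minimality forces the surface to press against $\delta$ from a single side, so the ridge over $\delta$ again carries cone angle at least $2\pi$. Hence $g$ is a $\mathrm{CAT}(-1)$--piecewise ruled map, giving (iii).

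The main obstacle will be non-degeneracy of the minimizer. A priori the minimizing sequence could collapse: an edge could shrink to a point, a handle could pinch off, or the whole surface could limit onto $\Delta$ and lose its area. This is exactly where the hypotheses enter. Since $\Sigma$ has genus greater than $1$, any such degeneration would force an essential simple closed curve on $\Sigma$ to bound a disk along which the homotopy could compress, whereas $2$--incompressibility rel $\Delta$ requires every such compressing disk to cross $\Delta$; so every attempt to simplify the surface within $N\setminus\Delta$ is blocked by the geodesics. The surface therefore stays hung up on $\Delta$ with area bounded below, and the limit is a nondegenerate piecewise ruled $\mathrm{CAT}(-1)$ map. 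Making the confinement and the anti-collapsing estimates precise — in particular quantifying how $2$--incompressibility keeps the triangulation from degenerating in the limit — is the step I expect to require the most care.
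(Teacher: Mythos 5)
Your proposal takes a genuinely different route from the paper --- a variational one, minimizing area over piecewise ruled maps subject to a non-crossing constraint --- and in doing so it reintroduces exactly the analytic difficulties that the paper is designed to avoid, without resolving them. The paper's proof is a direct construction: choose a pants decomposition $c_1,\ldots,c_{3m-3}$ of $\Sigma$ and a triangulation with all vertices on these curves; pass to the metric completion $\overline U$ of the universal cover of $N\setminus\Delta$, which is shown to be a $\mathrm{CAT}(-1)$ space (\fullref{CAT}), so that each $c_i$ can be straightened to the axis of a hyperbolic isometry and each remaining edge to the unique geodesic segment; then rule each triangle from a vertex. No minimization occurs, and homotopies ``never cross $\Delta$'' because they are required to lift to $\overline U$ (\fullref{r_1}). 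The $\mathrm{CAT}(-1)$ property is then automatic: every vertex lies in the interior of an arc mapped to a geodesic, so its link splits into two arcs each of $\nu$--length at least $\pi$; when the vertex lands on $\Delta$ this is exactly \fullref{l_1}, a length estimate in the infinite cyclic branched cover of the round sphere.

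Two steps of your argument have genuine gaps. First, the cone angle at points where $g(\Sigma)$ touches $\Delta$: your claim that ``minimality forces the surface to press against $\delta$ from a single side'' is not correct --- the whole phenomenon being captured is that the surface \emph{wraps} around $\Delta$, possibly many times (see \fullref{f_2}(b)), and one-sidedness is neither forced nor what is needed. Moreover, a first-variation argument cannot establish the angle bound there, because the admissible variations are constrained not to cross $\Delta$; it is precisely this constraint that prevents area from being decreased, so minimality gives no information. The correct input is the branched-link length estimate of \fullref{l_1}, which your setup never produces. Second, existence of the minimizer: the assertion that a minimizing sequence escaping every compact set must have area blowing up is false when the relevant end is geometrically infinite (ruled triangles with distant vertices can have small area), and your admissible class $\mathcal R$ is not obviously closed --- if $g_n$ and $g$ both touch $\Delta$, the short connecting homotopy used to pass to the limit may cross $\Delta$ at intermediate times, violating condition (ii). The paper sidesteps both problems by working with canonical geodesic representatives in the completed cover rather than with a limit of a minimizing sequence. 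Your appeal to $2$--incompressibility to prevent degeneration is pointing at the right hypothesis (the paper uses it to guarantee that the $c_i$ are represented by hyperbolic, not parabolic or meridional, elements, so their axes exist), but as stated it does not yield the quantitative non-collapsing you acknowledge you would need.
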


Here, a continuous map $f\co \Sigma\longrightarrow N$ is said to be \emph{$2$--incompressible\/} in $N$ rel.\ $\Delta$ if 
$f(\Sigma)\cap \Delta=\emptyset$, $f_*\co \pi_1(\Sigma)\longrightarrow \pi_1(N\setminus \Delta)$ is injective, and 
for any simple noncontractible loop $l$ in $\Sigma$ the restriction $f|_l$ is not freely homotopic in $N\setminus 
\Delta$ to a (multiplied) meridian of any component of $\Delta$.
See \fullref{d_1} for the definition of piecewise ruled maps.
We say that a map $g$ satisfying properties (i)--(iii) as above or its image $g(\Sigma)$ is a 
$\mathrm{CAT}(-1)$--\emph{ruled wrapping\/} of $\Delta$ in $N$ homotopic to $f$.
In fact, \fullref{t_1} is a special case of \fullref{p_1}, which corresponds to the main construction 
lemma in \cite{cg}.

In \fullref{S4}, we will give a short proof of Marden's Conjecture along ideas in \cite{cg}.
Our proof is self-contained in the sense that it does not rely on published partial solutions to the conjecture.
It is not hard to see that all arguments and proofs in this paper work also in the case when the ambient manifold 
$N$ has pinched negative curvature and hyperbolic cusps by invoking results in Canary \cite[\S 4]{can}.

\medskip

\textbf{Acknowledgments}\qua
The author would like to thank Brian Bowditch and the referee for helpful comments and suggestions.

\section{Completion of certain hyperbolic metrics}\label{S1}

For a closed subset $A$ in a metric space $(X,d)$, the $r$--neighborhood of $A$ in $X$, $\{y\in X;\, d(y,A)\leq r\}$, 
is denoted by $\mathcal{N}_r(A)$ (or more strictly by $\mathcal{N}_r(A,X)$).
In the case when $A$ is a single point set $\{x\}$, we also set $\mathcal{N}_r(\{x\})=\mathcal{B}_r(x)$.
The link of $x$ in $X$ with radius $r$, $\{y\in X;\, d(y,x)= r\}$, is denoted by $\mathcal{S}_r(x)$.

Let $U$ be a simply connected incomplete hyperbolic $3$--manifold with metric completion $\overline U$ such that each 
component $l$ of $L=\overline U\setminus U$ 
is a geodesic line, and there exists a constant $c>0$ with $\mathrm{dist} (x,y)\geq 3c$ for any points $x,y$ 
contained in distinct components of $L$.
Moreover, we suppose that for any component $l$ of $L$ there exists an infinite cyclic branched covering 
$p_l\co \mathcal{N}_c(l,\overline U)\longrightarrow \mathcal{N}_c(j,\mathbf{H}^3)$
branched over a geodesic line $j$ of $\mathbf{H}^3$, such that the restriction 
$p_l|_{\smash{\mathcal{N}_c(l, \overline U)\setminus l}}$ is a locally isometric covering.
From the definition, $\mathcal{N}_c (l,\overline U)$ is homeomorphic to the quotient space of 
$\mathbf{R}^2\times [0,1]$ by the identification map $a\co \mathbf{R}^2\times \{0\}\longrightarrow \mathbf{R}$ 
defined by $a(x,y,0)=x$.

Suppose that $\sigma$ is a shortest arc in $\overline{U}$ connecting two given points and consisting of two 
hyperbolic segments $\sigma_1,\sigma_2$ with $\sigma\cap L=\sigma_1\cap \sigma_2=\{x\}$.
Let $x_i$ $(i=1,2)$ be the point in $\sigma_i$ with $\mathrm{dist} (x_1,l)=\mathrm{dist}(x_2,l)=s>0$, where $l$ 
is the component of $L$ containing $x$; see \fullref{f_1}\,(a).
%%%%%%%%%%%%%%%%%%%%%%%%%%%%%%%%%%%%%%%%%%%%%%%%%%%%%%%
\begin{figure}[ht!]
\labellist
\small\hair 2pt
\pinlabel $\sigma_1$ [tr] at 150 686
\pinlabel $x_1$ [t] at 172 696
\pinlabel $s$ [t] at 180 733
\pinlabel $x_2$ [bl] at 238 703
\pinlabel $\sigma_2$ [tl] at 255 671
\pinlabel $P_1$ [tl] at 269 678
\pinlabel $P_2$ [bl] at 287 719
\pinlabel $x$ [b] at 215 741
\pinlabel $l$ [b] at 154 755
\pinlabel $\overline{U}$ at 267 751
\pinlabel (a) [t] at 207 660
\pinlabel $\theta_1$ [t] at 398 700
\pinlabel $\theta_2$ [bl] at 404 715 
\pinlabel $\mathbf{H}^2$ at 342 754
\pinlabel (b) [t] at 397 660
\endlabellist
\centering
\includegraphics{\figdir/fig_1}
%\nocolon
\caption{}
\label{f_1}
\end{figure}
%%%%%%%%%%%%%%%%%%%%%%%%%%%%%%%%%%%%%%%%%%%%%%%%%%%%%%%
There exist totally geodesic half planes $P_i$ in $\overline U$ with $\sigma_i\subset P_i$ and $\partial P_i = l$. 
Since the subsegment $\tau$ of $\sigma$ with $\partial \tau=\{x_1,x_2\}$ is the shortest arc in $\overline U$ 
connecting $x_1$ with $x_2$, we have
\begin{equation}\label{eqn_1}
\theta_1+\theta_2=\pi,
\end{equation}
where $\theta_i$ is the angle made by $\sigma_i$ and a fixed ray in $l$ emanating from $x$.
This fact is easily seen by considering the developing of $P_1\cup P_2$ on $\mathbf{H}^2$; see \fullref{f_1}\,(b).

For any $d$ with $0<d\leq c$, the set $\mathcal{B}_d(x,\overline U)$ is homeomorphic to the subset of $\mathbf{R}^3$
$$\{(u,v,w)\in \mathbf{R}^3;\, u^2+v^2+w^2\leq 1, w>0\}\cup \{(u,0,0)\in \mathbf{R}^3;\, -1\leq u\leq 1\}.$$
In particular, $\mathcal{B}_d(x,\overline U)$ is simply connected.
The image $p_l(\mathcal{B}_d(x,\overline U))$ coincides with the hyperbolic ball 
$\mathcal{B}_d(\hat x,\mathbf{H}^3)$, where $\hat x=p_l(x)$.
Rescaling the metric on the boundary $S=\mathcal{S}_d(\hat x,\mathbf{H}^3)$ of the ball, we have the spherical 
metric $\nu$ on $S$ isometric to the unit sphere in the Euclidean $3$--space.
Consider the metric on $\tilde S=\mathcal{S}_d(x,\overline U)$, still denoted by $\nu$, so that the infinite cyclic 
branched covering $p_l|\tilde S\co \tilde S\longrightarrow S$ is locally pathwise isometric.
Here, $p_l|\tilde S$ being \emph{locally pathwise isometric\/} means that 
$\mathrm{length}_\nu (\alpha)=\mathrm{length}_\nu(p_l(\alpha))$ for any rectifiable arc $\alpha$ in $\tilde S$.
One can take $d>0$ so that $\sigma'=\sigma\cap \mathcal{B}_d(x)$ is a geodesic segment in $\mathcal{B}_d(x)$ with 
$\partial \sigma'\subset \tilde S$.
Let $\gamma$ be any rectifiable arc in $\tilde S$ with $\partial \gamma=\partial \sigma'$.
Since $\mathcal{B}_d(x)$ is simply connected, $\gamma$ is homotopic rel.\ $\partial \gamma$ to $\sigma'$ in 
$\mathcal{B}_d(x)$.
Then the following lemma is proved immediately from the equality \eqref{eqn_1} and by checking the situation of 
$\hat \gamma=p_l(\gamma)$ in $S$; see \fullref{f_2}.

\begin{lemma}\label{l_1}
$\mathrm{length}_\nu (\gamma)\geq \pi$.
\end{lemma}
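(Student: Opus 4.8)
The plan is to push the length estimate onto the round sphere $S=\mathcal{S}_d(\hat x,\mathbf{H}^3)$ through the length--preserving projection $p_l$, locate there the two endpoints of $\hat\gamma=p_l(\gamma)$ from the angle data, and read off the bound from \eqref{eqn_1}. Write $\{z_1,z_2\}=\partial\sigma'$ with $z_i\in\sigma_i$, and $\hat z_i=p_l(z_i)\in S$. Since $p_l|\tilde S$ is locally pathwise isometric, $\mathrm{length}_\nu(\gamma)=\mathrm{length}_\nu(\hat\gamma)$ and $\hat\gamma$ is an arc in $S$ joining $\hat z_1$ to $\hat z_2$, so it suffices to prove $\mathrm{length}_\nu(\hat\gamma)\ge\pi$.

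I would first locate $\hat z_1,\hat z_2$ on the unit sphere $S$. The geodesic $j$ meets $S$ in two antipodal points $\hat n,\hat s$, namely the two directions of $j$ at $\hat x$. Because $p_l$ is a local isometry off $l$ and continuous across it, the geodesic $\hat\sigma_i=p_l(\sigma_i)$ still makes angle $\theta_i$ with the image in $j$ of the fixed ray in $l$; hence $\hat z_i$ lies at $\nu$--distance $\theta_i$ from $\hat n$. By \eqref{eqn_1} we have $\theta_1+\theta_2=\pi$, so $\hat z_1$ is at distance $\theta_1$ from $\hat n$ while $\hat z_2$ is at distance $\theta_2=\pi-\theta_1$ from $\hat n$, that is, at distance $\theta_1$ from the opposite pole $\hat s$.

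Next I would control the azimuthal separation using that $\sigma$ is a shortest arc. Since $\sigma$ meets $L$ only at $x$ and is length--minimizing, its two directions $z_1,z_2$ at $x$ are at distance $\ge\pi$ in the link $\tilde S$ (the corner condition for a geodesic through a singular point). As $\tilde S$ is the infinite cyclic branched covering of $S$ over $\{\hat n,\hat s\}$, this forces the azimuthal separation of the two meridians carrying $z_1,z_2$ to be at least $\pi$, so the connection from $\hat z_1$ to $\hat z_2$ compatible with the lift must run over one of the poles; see \fullref{f_2}. Its length is then $\theta_1+\theta_2=\pi$ over $\hat n$, equivalently $(\pi-\theta_1)+(\pi-\theta_2)=\pi$ over $\hat s$. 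Hence $\mathrm{length}_\nu(\hat\gamma)\ge\pi$, and the lemma follows. Equivalently, and perhaps more cleanly, one may argue intrinsically: $\mathrm{length}_\nu(\gamma)\ge d_{\tilde S}(z_1,z_2)$, the over--the--pole path gives $d_{\tilde S}(z_1,z_2)\le\theta_1+\theta_2=\pi$, and the corner condition gives $d_{\tilde S}(z_1,z_2)\ge\pi$, so $d_{\tilde S}(z_1,z_2)=\pi$.

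The main obstacle is precisely this azimuthal step. The relation \eqref{eqn_1} only pins down the polar angles of $\hat z_1,\hat z_2$, and the naive estimate $\mathrm{length}_\nu(\hat\gamma)\ge d_\nu(\hat z_1,\hat z_2)$ yields $\pi$ only when the two points are antipodal, i.e.\ when the dihedral angle between $P_1$ and $P_2$ equals $\pi$. When that dihedral angle exceeds $\pi$ the points $\hat z_1,\hat z_2$ are not antipodal in $S$, and one genuinely needs the branched--covering structure: $\gamma$ joins the prescribed lifts $z_1,z_2$, which forces $\hat\gamma$ across a pole and recovers the bound $\theta_1+\theta_2=\pi$.
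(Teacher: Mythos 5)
Your geometric picture coincides with the paper's (whose own proof is only ``immediate from \eqref{eqn_1} and by checking the situation of $\hat\gamma$ in $S$''): project to the round sphere, place $\hat z_1,\hat z_2$ at colatitudes $\theta_1$ and $\theta_2=\pi-\theta_1$ from $\hat n$, and argue that the lift constraint forces $\mathrm{length}_\nu(\hat\gamma)\ge\pi$. You are also right, and more explicit than the paper, that \eqref{eqn_1} alone is not enough: one must control the azimuthal separation of $z_1$ and $z_2$ in $\tilde S$, since for small separation the two points are close and the bound fails.

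The gap is in how you supply that control. Your lower bound rests entirely on ``the corner condition for a geodesic through a singular point'', i.e.\ on $d_{\tilde S,\nu}(z_1,z_2)\ge\pi$. But $\nu$ is a length metric on $\tilde S$ and $\mathrm{length}_\nu(\gamma)\ge d_{\tilde S,\nu}(z_1,z_2)$ for every competitor $\gamma$, so this assertion \emph{is} \fullref{l_1}; quoting it as known is circular. It becomes a theorem only after one identifies $\mathcal{B}_d(x,\overline U)$ with a ball in the hyperbolic cone over $(\tilde S,\nu)$ and applies the cone distance formula of \cite[Chapter I.5]{bh} --- an identification you neither state nor justify, and which carries essentially all the content of the lemma. (It also cannot be deduced from $\mathrm{CAT}(-1)$ considerations, since \fullref{l_1} is an input to \fullref{CAT}.) The non-circular fix is elementary and exactly parallel to the paper's derivation of \eqref{eqn_1}: if the azimuthal separation $\Lambda$ of $z_1,z_2$, that is, the dihedral angle along $l$ of the finite wedge bounded by $P_1\cup P_2$, were less than $\pi$, that wedge would develop isometrically onto a convex wedge of angle $\Lambda$ in $\mathbf{H}^3$; by the spherical law of cosines with $\theta_2=\pi-\theta_1$ the developed $\tau$ then has a genuine corner at $\hat x$, so the geodesic of $\mathbf{H}^3$ joining the developed $x_1,x_2$ lies in the wedge and is strictly shorter, contradicting minimality of $\tau$. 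Hence $\Lambda\ge\pi$, and the suspension distance formula (or a direct estimate of $\int\sqrt{\smash[b]{\dot\theta^2+\sin^2\theta\,\dot\lambda^2}}\,dt$) yields $d_{\tilde S,\nu}(z_1,z_2)=\pi$. Note finally that in your first version the over-the-pole path of length $\theta_1+\theta_2=\pi$ only gives the upper bound $d_{\tilde S,\nu}(z_1,z_2)\le\pi$; the lower bound for arbitrarily winding $\gamma$ (the case of Figure 2(b)) is precisely what still requires the estimate above.
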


%%%%%%%%%%%%%%%%%%%%%%%%%%%%%%%%%%%%%%%%%%%%%%%%%%%%%%%
\begin{figure}[ht!]
\labellist
\small\hair 2pt
\pinlabel $S$ [br] at 168 750 
\pinlabel $\hat \gamma$ [t] at 201 750
\pinlabel $\hat \sigma$ [tl] at 185 691 
\pinlabel $j$ [bl] at 261 693 
\pinlabel (a) [t] at 202 636
\pinlabel $S$ [br] at 339 750
\pinlabel $\hat \gamma$ [bl] at 395 732 
\pinlabel $\hat \sigma$ [br] at 364 690 
\pinlabel $j$ [bl] at 433 693 
\pinlabel (b) [t] at 375 636
\endlabellist 
\centering
\includegraphics{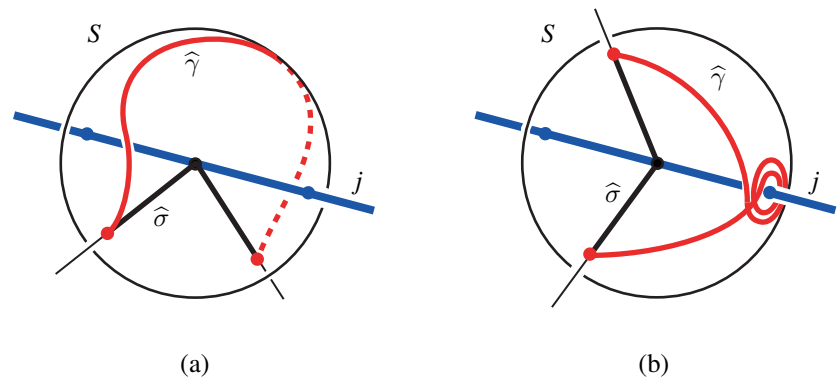}
\caption{$j=p_l(l)$, $\hat \sigma=p_l(\sigma')$.
(b) is the case when $\hat \gamma$ winds around $j$ more than once, but 
$\mathrm{length}_\nu (\gamma)=\mathrm{length}_\nu (\hat \gamma)$ does not exceed $\pi$ very much.}
\label{f_2}
\end{figure}
%%%%%%%%%%%%%%%%%%%%%%%%%%%%%%%%%%%%%%%%%%%%%%%%%%%%%%%

For any $k\leq 0$, let $\mathbf{F}(k)$ be a complete Riemannian plane of constant curvature $k$.
A simply connected geodesic metric space $X$ is called a $\mathrm{CAT}(k)$--\emph{space\/} if any geodesic triangle 
$\Delta$ in $X$ is \emph{not thicker\/} than a comparison triangle $\bar \Delta$ in $\mathbf{F}(k)$, that is, for 
any two points $s$ and $t$ in the edges of $\Delta$ and their comparison points $\bar s$ and $\bar t$ in $\bar\Delta$, 
$\mathrm{dist}_X(s,t)\leq \mathrm{dist}_{\mathbf{F}(k)}(\bar s,\bar t)$.
See Bridson--Haefliger \cite{bh} for fundamental properties of such spaces.

\begin{lemma}\label{CAT}
$\overline{U}$ is a $\mathrm{CAT}(-1)$--space.
\end{lemma}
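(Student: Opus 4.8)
The plan is to obtain the global $\mathrm{CAT}(-1)$ property from a purely local one via the Cartan--Hadamard theorem, and to reduce that local condition along $L$ to the length estimate of \fullref{l_1}. First I would record the two global hypotheses of Cartan--Hadamard. The space $\overline U$ is complete, being a metric completion; and it is simply connected, since in the local model every deleted neighborhood $\mathcal N_c(l)\setminus l$ is the infinite cyclic cover of a solid torus minus its core and is therefore simply connected, so any loop in $\overline U$ may be pushed off the singular set $L$ into the simply connected manifold $U$. By the version of the Cartan--Hadamard theorem for an upper curvature bound $\kappa\le 0$ (see \cite{bh}), a complete, simply connected, locally $\mathrm{CAT}(-1)$ space is globally $\mathrm{CAT}(-1)$. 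Hence it suffices to produce, at each point of $\overline U$, a neighborhood that is $\mathrm{CAT}(-1)$.

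At a point of $U=\overline U\setminus L$ there is nothing to do, since a small ball is isometric to a ball in $\mathbf H^3$. The only remaining case is a point $x\in L$. Here I would invoke the description set up just before \fullref{l_1}: for $0<d\le c$ the ball $\mathcal B_d(x,\overline U)$ is isometric to the hyperbolic cone of radius $d$ over its link $\tilde S=\mathcal S_d(x,\overline U)$, equipped with the metric $\nu$. By the standard cone criterion (Berestovskii's theorem, as in \cite{bh}), such a hyperbolic cone is $\mathrm{CAT}(-1)$ if and only if the link $\tilde S$ is $\mathrm{CAT}(1)$. Thus the entire lemma is reduced to showing that $\tilde S$ is $\mathrm{CAT}(1)$.

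To prove that, I would first check that $\tilde S$ is locally $\mathrm{CAT}(1)$: away from the two points where $l$ meets $\tilde S$ the map $p_l$ is a local isometry onto the round sphere $S$, so $\tilde S$ is locally spherical there, while at each of the two branch points the cone angle is infinite, so the space of directions has length at least $2\pi$ and is $\mathrm{CAT}(1)$. Since a complete, simply connected, locally $\mathrm{CAT}(1)$ space is $\mathrm{CAT}(1)$ as soon as it contains no closed geodesic of length less than $2\pi$ (the local-to-global theorem for curvature $\le 1$ in \cite{bh}), it remains only to rule out such short closed geodesics. This is where \fullref{l_1} enters: a closed geodesic of $\tilde S$ would project to a great circle of $S$, which either passes through a branch point (impossible, since no geodesic passes through a cone point of infinite angle) or separates the two branch points and hence lifts to a noncompact line; in either case the estimate $\mathrm{length}_\nu(\gamma)\ge\pi$ applied to subarcs joining points that project to antipodes of $S$ forbids a closed geodesic of length less than $2\pi$.

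The step I expect to be the main obstacle is precisely this verification that $\tilde S$ is $\mathrm{CAT}(1)$ through the two branch points, where the metric is genuinely singular and arcs may wind repeatedly around $l$ as in \fullref{f_2}(b), so that naive comparison with the round sphere breaks down. A clean way to circumvent this is to recognize $\tilde S$ as the spherical suspension $S^0\ast\mathbf R$, that is, the spherical join of the two branch points with the infinite cyclic cover $\mathbf R$ of the equatorial circle of $S$; since $\mathbf R$ is $\mathrm{CAT}(1)$, the join theorem of \cite{bh} yields that $\tilde S=S^0\ast\mathbf R$ is $\mathrm{CAT}(1)$ at once, with \fullref{l_1} encoding the fact that points of $\tilde S$ projecting to antipodes of $S$ lie at distance exactly $\pi$.
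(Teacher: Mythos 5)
Your proof is correct, but at the crucial local step it takes a genuinely different route from the paper's. Both arguments begin the same way, invoking the generalized Cartan--Hadamard theorem of \cite{bh} to reduce the global statement to producing a $\mathrm{CAT}(-1)$ neighborhood of each point of $L$. From there the paper argues directly on triangles: the interior of each geodesic edge of a triangle in $\mathcal{B}_c(x,\overline U)$ meets $L$ in at most one point (or lies in $L$), so the triangle decomposes into at most three totally geodesic hyperbolic subtriangles meeting along $L$; \fullref{l_1} gives interior angle at least $\pi$ at each such intersection point, and an Alexandrov-type gluing comparison then shows the triangle is not thicker than its comparison triangle in $\mathbf{H}^2$. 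You instead recognize $\mathcal{B}_d(x,\overline U)$ as the hyperbolic cone over its link $\tilde S$, apply Berestovskii's criterion to reduce to $\tilde S$ being $\mathrm{CAT}(1)$, and settle that by identifying $\tilde S$ with the spherical join $S^0\ast\mathbf{R}$ and quoting the join theorem. Your route leans more heavily on the machinery of \cite{bh} (the cone and join theorems) but is arguably cleaner, since it bypasses \fullref{l_1} entirely --- the $\geq\pi$ estimate is absorbed into the join structure --- whereas the paper's decomposition argument is more hands-on and uses \fullref{l_1} as its sole geometric input. One remark: your third paragraph (local $\mathrm{CAT}(1)$ at the branch points plus exclusion of short closed geodesics) is the shakiest part of the write-up, as you yourself acknowledge; it is rendered unnecessary by the join identification in your final paragraph, which should be presented as the actual argument rather than as a fallback.
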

\begin{proof}
By the generalized Cartan--Hadamard Theorem \cite[Chapter II.4, Theorem 4.1(2)]{bh}, it suffices to show that, 
for any point $x\in \overline{U}$, there exists $r>0$ such that $B_r(x)$ is a $\mathrm{CAT}(-1)$--space.
If $x\in U$ then $B_r(x)$ is obviously a $\mathrm{CAT}(-1)$--space if we take $r$ so that $B_r(x)\cap L=\emptyset$.
So suppose that $x\in L$.
Let $\Delta$ be any triangle in $B_c(x)$ with geodesic edges $\gamma_1,\gamma_2,\gamma_3$.
Each $\Int \gamma_i$ either meets $L$ at most one point $p_i$ or is contained in $L$.
From this, we know that the union $\gamma_1\cup \gamma_2\cup \gamma_2$ bounds a triangle $\Delta_0$ in $B_c(x)$ 
consisting of at most three totally geodesic hyperbolic subtriangles.
In fact, when $p_1,p_2,p_3$ exist, the totally geodesic triangle with vertices $p_1,p_2,p_3$ is degenerated to 
a geodesic segment in $L$.
By \fullref{l_1}, the internal angle of $\Delta_0$ at any $p_i$ is at least $\pi$.
This shows that $\Delta_0$ is not thicker than a comparison triangle in $\mathbf{H}^2$.
It follows that $B_c(x)$ is a $\mathrm{CAT}(-1)$--space.
\end{proof}

It is not hard to see that any geodesic segment in $\overline U$ is a broken line consisting of finitely 
many hyperbolic segments, and any vertex of the broken line other than its end points lies in $L$.
By \fullref{CAT}, a geodesic segment $\sigma$ in $\overline U$ connecting two given points is uniquely 
determined.
Moreover, $\sigma$ varies continuously with its end points.

Let $Z$ be an incomplete hyperbolic $3$--manifold such that the total space of the universal covering 
$q\co U\longrightarrow Z$ has the induced metric as above.
We suppose moreover that for the metric completion $\overline Z$ each component $l$ of $\overline Z\setminus Z$ 
is either a geodesic line or a geodesic loop.
That is, no component of $\overline Z\setminus Z$ is a one point set.
Then $q\co U\longrightarrow Z$ is extended to a locally pathwise isometric map $\bar q\co \overline U\longrightarrow 
\overline Z$.
Note that the frontier $\{x\in \overline Z;\, \mathrm{dist}(x,l)=c\}$ of $\mathcal{N}_c(l,\overline Z)$ in 
$\overline Z$ is homeomorphic to either $\mathbf{R}^2$ or an open annulus or a torus.

\begin{remark}\label{r_1}
Even in the case when $\mathcal{N}_c(l,\overline Z)$ is homeomorphic to a solid torus, we always suppose that 
\emph{homotopies in\/} $\overline Z$ starting from a continuous map $f\co A\longrightarrow Z$ never cross $l$ (possibly 
they touch $l$), where $A$ is a manifold of dimension less than three.
In other words, we only consider homotopies $F\co A\times [0,1]\longrightarrow \overline Z$ which can be covered by a 
map $\tilde F\co \tilde A\times [0,1]\longrightarrow \overline U$, where $\tilde A$ is the universal covering space 
of $A$.
\end{remark}

\begin{definition}\label{d_1}
Let $f\co \Sigma\longrightarrow \overline Z$ be a continuous map from a closed orientable surface $\Sigma$.
Suppose that $\Sigma$ admits a cell decomposition $K$ consisting of finitely many triangular $2$--cells.
We say that $f$ is a \emph{piecewise ruled map\/} with respect to $K$ if (i) for each edge of $e$ of $K$, $f(e)$ 
is a broken line consisting of finitely many hyperbolic segments, and (ii) for each $2$--cell $F$ of $K$, $f(F)$ is 
a ruled triangle based on a single vertex.
Since the Gaussian curvature at any smooth point of a ruled surface is nonpositive, the (intrinsic) curvature of 
$\Sigma$ at any smooth point is at most $-1$. 
This map $f$ is called a $\mathrm{CAT}(-1)$--\emph{piecewise ruled map\/} if the cone-angle of $\Sigma$ at any 
singular point is at least $2\pi$.
\end{definition}

\section[Applicable version of Theorem 0.1]{Applicable version of \fullref{t_1}}\label{S2}

Throughout this section, we assume that all hyperbolic $3$--manifolds and surfaces are orientable.

Let $N$ be a complete hyperbolic $3$--manifold $N$ without parabolic cusps, and $W$ a $3$--dimensional compact connected 
submanifold of $N$.
Consider a link $\Delta$ in $\Int W$ consisting of finitely many disjoint simple closed geodesics in $N$.
Let $p\co X\longrightarrow W$ be the covering of $W$ associated to a finitely generated subgroup of $\pi_1(W)$.
Here we make the following assumptions, which correspond to those in the main construction lemma \cite[Lemma 2.3]{cg}.
\begin{enumerate}[\rm (i)]
\item
$\partial W$ is incompressible in $N\setminus \Delta$.
\item
There exists a union $\hat \Delta$ of components of $p^{-1}(\Delta)$ such that the restriction $p|\hat \Delta\co 
\hat \Delta\longrightarrow \Delta$ is a homeomorphism.
\item
There exists a continuous map $f\co \Sigma\longrightarrow X$ from a closed surface $\Sigma$ of genus $m >1$ which 
is $2$--incompressible in $X$ rel.\ $\hat \Delta$.
\end{enumerate}

Set $X^\circ =X\setminus p^{-1}(\Delta)$.
The fundamental group $\pi_1(X^\circ)$ may be infinitely generated.
By (i), the restriction $p^\circ =p|X^\circ\co X^\circ\longrightarrow W\setminus \Delta\subset N\setminus \Delta$ 
is $\pi_1$--injective.
Thus, we may assume that $X^\circ$ is a subset of the total space of the covering $q\co Y^\circ \longrightarrow N
\setminus \Delta$ associated to the subgroup $p^\circ_*(\pi_1(X^\circ))$ of $\pi_1(N\setminus \Delta)$ and the 
inclusion $i\co X^\circ\longrightarrow Y^\circ$ is a homotopy equivalence.
Since $\partial X^\circ=\partial X=p^{-1}(\partial W)$ is a deformation retract of $Y^\circ \setminus 
\Int X^\circ$, condition (iii) implies that $f\co \Sigma\longrightarrow Y$ is $2$--incompressible in 
$Y$ rel.\ $\hat \Delta$, where $Y=X\cup (Y^\circ \setminus \Int X^\circ)=X\cup Y^\circ$.
The complement $Z=Y\setminus \hat \Delta$ has the induced incomplete metric as was studied in \fullref{S1}.
Let $\overline Z$ be the metric completion of $Z$.

With the notation and assumptions as above, we will prove the following proposition.

\begin{prop}\label{p_1}
There exists a homotopy $F\co \Sigma\times [0,1]\longrightarrow \overline Z$ which never crosses $\hat \Delta$ and 
connects $f$ with a $\mathrm{CAT}(-1)$--ruled wrapping $g\co \Sigma\longrightarrow \overline Z$ of $\hat \Delta$.
\end{prop}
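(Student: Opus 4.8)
The plan is to obtain $g$ as an area minimizer among piecewise ruled maps homotopic to $f$. First I would fix a cell decomposition $K$ of $\Sigma$ into finitely many triangles, together with the homotopy data recording, for each edge, the class of the corresponding path in $f$. For any choice of images $z_1,\dots,z_k\in\overline Z$ of the $0$--cells of $K$, each edge is then sent to the geodesic broken line joining the relevant lifts of its endpoints in $\overline U$ --- unique and depending continuously on the $z_i$ by \fullref{CAT} and the remark following it --- and each triangle is filled by the ruling based at one of its vertices. The resulting piecewise ruled map $g_{(z_i)}$ in the sense of \fullref{d_1} depends continuously on $(z_i)$, is joined to $f$ by a homotopy that never crosses $\hat\Delta$ (as in \fullref{r_1}), and its only possible singular points are the vertex images and the folds along the edges. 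Writing $A(z_1,\dots,z_k)$ for the area of the induced singular metric on $\Sigma$, the task becomes to minimize $A$ over $\overline Z^{\,k}$ and to show that a minimizer is a $\mathrm{CAT}(-1)$ map.

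The main obstacle, as I see it, is the compactness of a minimizing sequence: one must show that a sequence $(z_1^n,\dots,z_k^n)$ with $A\to\inf A$ stays, after passing to a subsequence, in a fixed compact region of $\overline Z$ and does not degenerate. This is precisely where the $2$--incompressibility of $f$ rel $\hat\Delta$ is indispensable. On the one hand, the injectivity of $f_*$ together with the incompressibility of $\partial W$ confines the surfaces to a bounded neighborhood of the compact core carrying $f_*(\pi_1(\Sigma))$, so that no family member can be pushed out toward an end of the covering, and it also gives a positive lower bound for the area that forbids collapse to a lower--dimensional set. On the other hand, the hypothesis that no essential loop of $\Sigma$ is freely homotopic to a (multiple of a) meridian of $\hat\Delta$ prevents the surfaces from being absorbed into the branch locus $L=\overline Z\setminus Z$. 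Granting these confinement and non--degeneracy properties, I expect a uniform diameter bound; the ruled maps are then uniformly Lipschitz, and an application of Arzel\`a--Ascoli to $g_{(z_i^n)}$ produces a limit $g=g_{(z_i^*)}$ realizing $\inf A$.

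It remains to verify that the minimizer $g$ is $\mathrm{CAT}(-1)$, that is, that its cone angle at every singular point is at least $2\pi$. By the Gauss--Bonnet relation $\int_\Sigma K\,dA+\sum_p(2\pi-\theta_p)=2\pi\chi(\Sigma)$ and the fact that $K\le-1$ at smooth points (\fullref{d_1}), any cone point $p$ with $\theta_p<2\pi$ contributes a strictly positive term $2\pi-\theta_p$ and hence carries concentrated positive curvature, an area--unstable configuration; I would make this quantitative through the first variation of $A$, checking that displacing a vertex of cone angle $<2\pi$ in a suitable direction, or unfolding an edge of total angle $<2\pi$, strictly decreases $A$ and so contradicts minimality. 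At a vertex image lying on $L$ the required inequality is instead automatic: $\overline Z$ is $\mathrm{CAT}(-1)$ with total angle at least $2\pi$ around $L$ by \fullref{l_1} and \fullref{CAT}, and there the surface has been pulled tight against $\hat\Delta$ --- this is exactly the wrapping. Once $g$ is obtained, the homotopy $F$ is assembled by concatenating a homotopy from $f$ to an initial representative $g_{(z_i^0)}$ with the deformation from $(z_i^0)$ to $(z_i^*)$ traced out during the minimization; by \fullref{r_1} the whole homotopy lifts to $\overline U$ and hence never crosses $\hat\Delta$.
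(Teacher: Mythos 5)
Your proposal takes a variational route (minimize area over the finite-dimensional family of piecewise ruled maps with prescribed combinatorics), whereas the paper's proof is a direct construction with no minimization whatsoever, and the two places where your argument is only sketched are exactly the places where it would be hard to complete. First, the compactness of a minimizing sequence is asserted rather than proved: nothing in your setup prevents the vertex images $z_i^n$ from escaping toward an end of $\overline Z$, or from being absorbed into $L=\overline Z\setminus Z$, while the area stays bounded; extracting such confinement from $2$--incompressibility is essentially the hard analytic content of Calegari and Gabai's shrinkwrapping existence theorem, which is precisely what this paper is written to avoid. Second, and more seriously, the verification that a minimizer is $\mathrm{CAT}(-1)$ does not go through as stated. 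At an interior cone point, the claim that cone angle $<2\pi$ is ``area-unstable'' requires an actual first-variation computation for ruled maps (moving one vertex changes all adjacent rulings, and it is not clear the area strictly decreases); and at a point of $L$ the inequality is not ``automatic'': the ambient space being $\mathrm{CAT}(-1)$ with total angle at least $2\pi$ around $L$ says nothing about the \emph{intrinsic} cone angle of the mapped surface there, since the surface could meet $L$ in an arbitrarily thin wedge.

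The paper sidesteps all of this with one structural choice you do not make: the triangulation $K$ is chosen so that every vertex lies on a pants curve $c_i$, and $f$ is first homotoped (in the sense of \fullref{r_1}) so that each $f(c_i)$ is a closed geodesic of $\overline Z$ --- the image of the axis of a hyperbolic isometry of $\overline U$ --- and every remaining edge is geodesic; then each triangle is ruled from a vertex. Consequently through every singular point $g(v)$ there is an arc $\alpha\subset\Sigma$ with $g(\alpha)$ a geodesic of $\overline Z$, and $\alpha$ cuts the link circle $\mathcal S_d(v,\Sigma)$ into two arcs whose images each join antipodal points of the sphere $\mathcal S_d(g(v))$: each therefore has length at least $\pi$, off $L$ because paths between antipodal points of a round sphere have length $\geq\pi$, and on $L$ by \fullref{l_1}. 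This yields the cone-angle bound at every singular point with no minimization, no compactness issue, and no variational analysis. To salvage your approach you would need to supply both the confinement estimate and the variational characterization of the cone angle; the direct construction is shorter and is the intended point of the paper.
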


\begin{proof}%[Proof of \fullref{p_1}]
Let $c_1,\ldots, c_{3m-3}$ be mutually disjoint simple loops in $\Sigma$ which define a pants decomposition of 
$\Sigma$.
Consider a cell decomposition $K$ of $\Sigma$ consisting of triangular $2$--cells and such that each vertex of $K$ 
is contained in $c_1\cup\cdots\cup c_{3m-3}$.
If necessary by deforming $f$ by a homotopy in the sense of \fullref{r_1} we may assume that each $f(c_i)$ 
is a closed geodesic in $\overline Z$, and $f(e)$ is a geodesic segment in $\overline Z$ for any edge $e$ of $K$ 
not contained in $c_1\cup\cdots\cup c_{3m-3}$.
In fact, $f(c_i)$ is the image of an axis of a hyperbolic transformation on the metric completion $\overline U$ 
of the universal covering space of $Z$.
See for example \cite[Theorem 6.8\,(1)]{bh}.
For any $2$--cell $F$ of $K$, take a vertex $v_0$ and the opposite edge $e_0$.
Then, $f|F$ can be homotoped rel.\ $\partial F$ to a map $g|F$ such that $g(F)$ is a ruled triangle consisting of 
all geodesic segments connecting $f(v_0)$ with points of $f(e_0)$.
These $g|F$ define a piecewise ruled map $g\co \Sigma\longrightarrow \overline Z$ homotopic to $f$.
From our construction of $g$, for any singular point $g(v)$ of $g(\Sigma)$, there exists an arc $\alpha$ in 
$\Sigma$ with $\Int \alpha\ni v$ and such that $g(\alpha)$ is a geodesic segment in $\overline Z$.
If $g(v)$ is not an element of $\overline Z\setminus Z$, then it is easily seen that the cone-angle of $\Sigma$ 
at $v$ is at least $2\pi$.
So we may assume that $g(v)$ is contained in a component $l$ of $\overline Z\setminus Z$.
For any sufficiently small $d>0$, $\alpha$ divides the circle $\mathcal{S}_d(v,\Sigma)$ into two arcs $\gamma_1$, 
$\gamma_2$.
By \fullref{l_1}, the $\nu$--length of $g(\gamma_i)$ $(i=1,2)$ in $\mathcal{S}_d(g(v),\overline Z)$ is at 
least $\pi$.
Thus, the cone-angle of $\Sigma$ at $v$ is at least $2\pi$.
This shows that $g$ is a $\mathrm{CAT}(-1)$--ruled wrapping of $\hat \Delta$ in $\overline Z$.
\end{proof}

Note that \fullref{t_1} is proved quite similarly to \fullref{p_1} by considering $(N,\Delta)$ 
instead of $(\overline Z,\hat \Delta)$.

\section{Compact cores and end reductions}\label{S3}

A $3$--manifold $X$ is \emph{topologically tame\/} if there exists an embedding $f\co X\longrightarrow Y$ into a compact 
manifold $Y$ with $f(X)\supset \Int Y$.
Throughout this section, we suppose that $M$ is an orientable, open, irreducible and connected $3$--manifold with 
finitely generated fundamental group.
An end $\mathcal{E}$ of $M$ is said to be \emph{topologically tame\/} if there exits a closed neighborhood of 
$\mathcal{E}$ in $M$ homeomorphic to $S\times [0,\infty)$ for some closed connected surface $S$.
It is easily seen that the open $3$--manifold $M$ is topologically tame if and only if each end of $M$ is so.

Scott \cite{sc} proved that $M$ contains a $3$--dimensional submanifold $C$, called a \emph{compact core\/} of $M$, 
such that the inclusion $i\co C\longrightarrow M$ is a homotopy equivalence.
Let $S$ be the component of $\partial C$ facing an end $\mathcal{E}$ of $M$, and $p\co \tilde M\longrightarrow M$ 
the covering associated with the image of $\pi_1(S)$ in $\pi_1(M)$.
There exists a compact core $\tilde C$ of $\tilde M$ such that $\partial \tilde C$ has a component $\tilde S$ 
mapped onto $S$ homeomorphically by $p$.
The manifold $\tilde C$ is a \emph{compression body\/}, ie, it is homeomorphic to $E\cup h_1\cup\cdots \cup h_m$ 
where $E$ is either a $3$--ball or $F\times [0,1]$ for some closed surface $F$ consisting of nonspherical components 
and the $h_i$'s are $1$--handles attached to one side of $E$.
In particular, when $E$ is a $3$--ball, the compression body $\tilde C$ is a handlebody.
Note that the end $\tilde{\mathcal{E}}$ of $\tilde M$ faced by $\tilde S$ is topologically tame if and only if 
$\mathcal{E}$ is also.

Let $\Delta=\delta_1\cup \cdots\cup \delta_{i_0}$ be an $i_0$--component link in the compression body $\tilde C$ 
such that $[\delta_k]$ $(k=1,\cdots,i_0-1)$ form a basis for the free abelian group $H_1(\tilde C,\mathbf{Z})$ 
and $[\delta_{i_0}]=[\delta_1]+\cdots+[\delta_{i_0-1}]$.
An advantage of considering compression bodies is that any nontrivial free decomposition of $\pi_1(\tilde C)$ 
induces a nontrivial decomposition of $H_1(\tilde C;\mathbf{Z})$.
In particular, this implies that the link $\Delta$ is \emph{algebraically disk-busting\/}, that is, for any 
nontrivial free decomposition $A*B$ of $\pi_1(\tilde C)$, there exists a component $\delta_k$ of $\Delta$ such 
that the element of $\pi_1(\tilde C)$ represented by $\delta_k$ is neither conjugate into $A$ nor $B$.

Some results in Myers \cite{my} concerning end reductions play an important role in the proof of \fullref{t_2}.
The paper is useful also as an expository article on end reductions.
A compact, connected, $3$--dimensional submanifold $R$ of $M$ is \emph{regular\/} if $M\setminus R$ is irreducible and 
the closure of any component of $M\setminus R$ in $M$ is not compact.
Let $\Delta$ be a link in $M$ each component of which is noncontractible in $M$.
An open submanifold $V$ of $M$ containing $\Delta$ is called an \emph{end reduction\/} of $M$ at $\Delta$ if it 
satisfies the following conditions.
\begin{enumerate}[\rm (i)]
\item
No component of $M\setminus V$ is compact.
\item
There exists a sequence $\{R_n\}$ of regular submanifolds of $M$ with $\Delta\subset R_1$, $R_n\subset 
\Int R_{n+1}$, $V=\bigcup_n R_n$ and such that $\partial R_n$ is incompressible in $M\setminus \Delta$.
\item
$V$ satisfies the \emph{engulfing property\/} at $\Delta$, that is, for any regular submanifold $N$ of $M$ with 
$\Delta \subset \Int N$ such that $\partial N$ is incompressible in $M\setminus \Delta$, $V$ is ambient 
isotopic rel.\ $\Delta$ to a manifold containing $N$.
\end{enumerate}
We refer to Brin--Thickstun \cite{bt} for the existence and uniqueness up to isotopy of end reductions.
According to Myers \cite[Theorem 9.2]{my}, if the link $\Delta$ is algebraically disk-busting, then an end 
reduction $V$ of $M$ at $\Delta$ is connected and the homomorphism $i_*\co \pi_1(V)\longrightarrow \pi_1(M)$ 
induced from the inclusion is an isomorphism.

\section{Proof of Marden's Conjecture}\label{S4}

Our proof of Marden's Conjecture is based on that of Calegari--Gabai \cite{cg}, but the importance of the 
disk-busting property is suggested by Agol \cite{ag}.
For simplicity, we only consider hyperbolic $3$--manifolds without parabolic cusps.
It is not hard to modify our argument for the case of manifolds with parabolic cusps.

\begin{theorem}[Marden's Tameness Conjecture]\label{t_2}
Let $N$ be an orientable hyperbolic $3$--manifold without parabolic cusps.
If $\pi_1(N)$ is finitely generated, then $N$ is topologically tame.
\end{theorem}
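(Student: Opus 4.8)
The plan is to follow the strategy of Calegari--Gabai \cite{cg}, substituting the ruled wrappings of \fullref{t_1} for their shrinkwrapped surfaces, with the disk-busting idea drawn from Agol \cite{ag}. The underlying manifold of $N$ is open, orientable, irreducible and connected with finitely generated $\pi_1$, so the results of \fullref{S3} apply; and since $N$ is topologically tame precisely when each of its ends is, I would fix an end $\mathcal E$ of $N$. By the covering construction of \fullref{S3}, letting $S$ be the component of $\partial C$ facing $\mathcal E$ and passing to the cover $p\co\tilde M\longrightarrow N$ associated with the image of $\pi_1(S)$, the end $\tilde{\mathcal E}$ of $\tilde M$ faced by $\tilde S$ is tame if and only if $\mathcal E$ is. As $\tilde M$ is again a complete hyperbolic $3$--manifold without parabolic cusps, now with a compression-body compact core $\tilde C$ and (since $N$ is cusp-free) $\tilde S$ of genus $m>1$, \fullref{t_1} applies to $\tilde M$ directly, and it suffices to prove $\tilde{\mathcal E}$ topologically tame.

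Next I would install the geodesic link. In $\tilde C$ take the algebraically disk-busting link $\Delta=\delta_1\cup\cdots\cup\delta_{i_0}$ of \fullref{S3}; since each $\delta_k$ is homologically nontrivial in $\tilde C$ it is noncontractible in $\tilde M$, so after a homotopy I may assume $\Delta$ is a disjoint union of simple closed geodesics of $\tilde M$. The point of the disk-busting hypothesis is that it forces $2$--incompressibility rel $\Delta$: if an essential loop on a frontier surface were freely homotopic in $\tilde M\setminus\Delta$ to a meridian of some $\delta_k$, then together with its meridian disk it would compress the surface and realize a nontrivial free splitting of $\pi_1(\tilde C)=\pi_1(\tilde M)$ incompatible with $\Delta$ being algebraically disk-busting. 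I would then form the end reduction $V$ of $\tilde M$ at $\Delta$ and apply Myers \cite{my}: $V$ is connected, $i_*\co\pi_1(V)\longrightarrow\pi_1(\tilde M)$ is an isomorphism, and $V=\bigcup_n R_n$ is exhausted by regular submanifolds whose frontiers are incompressible in $\tilde M\setminus\Delta$.

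From this exhaustion I would extract the exiting surfaces. Using the $\pi_1$--isomorphism and the engulfing property, for all large $n$ the frontier component $\Sigma_n$ of $\partial R_n$ facing $\tilde{\mathcal E}$ is a genus--$m$ surface homotopic to $\tilde S$, incompressible in $\tilde M\setminus\Delta$ and, by the previous remark, $2$--incompressible rel $\Delta$; moreover the $\Sigma_n$ leave every compact subset of $\tilde M$. Applying \fullref{t_1} to the inclusions $\Sigma_n\longrightarrow\tilde M\setminus\Delta$ produces homotopies, never crossing $\Delta$, to $\mathrm{CAT}(-1)$--ruled wrappings $g_n\co\Sigma_n\longrightarrow\tilde M$. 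Because each $g_n$ has curvature at most $-1$ at smooth points and all cone angles at least $2\pi$ (\fullref{d_1}), Gauss--Bonnet gives the uniform bound $\mathrm{Area}(g_n)\leq 2\pi(2m-2)$. Thus $\{g_n\}$ is a sequence of uniformly bounded-area $\mathrm{CAT}(-1)$--surfaces, all homotopic to $\tilde S$, exiting the end $\tilde{\mathcal E}$.

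The remaining step, which I expect to be the principal obstacle, is to upgrade this geometric picture to the topological statement that $\tilde{\mathcal E}$ has a neighbourhood homeomorphic to $\tilde S\times[0,\infty)$. Here I would use that ruled triangles, being assembled from geodesic segments that vary continuously with their endpoints, let the $g_n$ be joined into a proper sweep-out of the end by bounded-area surfaces; the uniform area bound is what prevents these surfaces from wandering uncontrollably into the thin part. One then obtains a proper homotopy equivalence $\tilde S\times[0,\infty)\longrightarrow\tilde M$ onto an end neighbourhood---equivalently, one replaces the immersed $g_n$ by embedded incompressible representatives and shows that consecutive ones cobound product regions $\tilde S\times[0,1]$ (irreducibility of $\tilde M$ together with Waldhausen's theorem that an $h$--cobordism between incompressible surfaces is a product)---so that the end is an increasing union of products and hence tame. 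Controlling the surfaces over the thin part and producing embedded, pairwise disjoint representatives is exactly the delicate analysis carried out in \cite{cg} and \cite{can}, and it is where the $\mathrm{CAT}(-1)$ estimate of \fullref{t_1} does its essential work.
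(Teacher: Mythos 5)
There is a genuine gap, and it sits at the heart of the argument: your link $\Delta=\delta_1\cup\cdots\cup\delta_{i_0}$ is a \emph{single fixed} link inside the compact core $\tilde C$, and nothing in your construction forces the ruled wrappings to exit the end. The homotopy produced by \fullref{t_1} is only a homotopy, not a proper one: even if the frontier surfaces $\Sigma_n$ of the exhaustion $R_n$ leave every compact set, the wrapped surfaces $g_n(\Sigma_n)$ are only constrained to stay in the homotopy class and to avoid $\Delta$, so they are free to retreat to a fixed compact neighborhood of $\Delta$ near the core; the uniform Gauss--Bonnet area bound does nothing to prevent this. This is exactly why the paper first rules out the geometrically finite case and then invokes Bonahon to produce a sequence of closed geodesics $\delta_i$ \emph{exiting} $\mathcal{E}$, enlarges each $\Delta_i=\delta_1\cup\cdots\cup\delta_i$ to be algebraically disk-busting, and wraps a surface for each $i$: the wrapped surface $\hat\Sigma_i$ has algebraic intersection number one with a ray $\hat\alpha_i$ emanating from $\hat\delta_i$, so $\Sigma_i$ must meet $\alpha_i$, and since the $\delta_i$ exit the end, so do the $\Sigma_i$. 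Your proposal never cites Bonahon and never supplies any mechanism making the surfaces exit, so the final step cannot even get started.

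A second, independent gap: you assert that for large $n$ the frontier component of $\partial R_n$ facing $\tilde{\mathcal E}$ is a genus--$m$ surface homotopic to $\tilde S$. The definition of an end reduction only gives that $\partial R_n$ is incompressible in $\tilde M\setminus\Delta$; its components can have arbitrary genus and need not be homotopic to $\tilde S$, and $2$--incompressibility rel $\Delta$ of a frontier surface does not follow just from $\Delta$ being disk-busting for $\pi_1(\tilde C)$. The paper manufactures the correct surface by a further covering trick: it passes to the cover $p_i\co X_i\longrightarrow W_i$ associated to $\pi_1(C_i)$, proves in \fullref{Tame} that $X_i$ is topologically tame (this is where Waldhausen, Canary and Simon's combination theorem enter), and takes $\hat S_i$ to be a boundary component of the resulting compactification; the disk-busting property is then used to show \emph{that} surface is $2$--incompressible rel $\hat\Delta_i$, and \fullref{p_1} (the relative version of \fullref{t_1}, performed in the metric completion $\overline Z_i$ of the cover rather than in $N$) is what is actually applied. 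Finally, even granting exiting $\mathrm{CAT}(-1)$--surfaces, the paper's endgame is more specific than your sketch: it uses the Bounded Diameter Lemma rather than an area bound alone, and in the handlebody case it needs the extra homological input $[\Sigma_i]=-[\partial J]\neq 0$ in $H_2(N\setminus\delta_1;\mathbf{Z})$, established by a separate Gauss--Bonnet argument excluding extra loop preimages of $\delta_1$.
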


First of all, we fix the setting for the proof.
It suffices to show that each end $\mathcal{E}$ of $N$ is topologically tame.
As was seen in \fullref{S3}, we may assume that a compact core $C$ of $N$ is a compression body.
Let $S$ be the component of $\partial C$ facing $\mathcal{E}$.
If $\mathcal{E}$ is \emph{geometrically finite\/}, that is, $C$ is locally convex in $S$, then it is well known 
that $\mathcal{E}$ is topologically tame, for example see Marden \cite{ma}.
So we may assume that $\mathcal{E}$ is not geometrically finite.
Then Bonahon \cite{bo} shows that there exists a sequence $\{\delta_i\}$ of closed geodesics in $N$ exiting 
$\mathcal{E}$.
If necessary, by adding finitely many closed geodesics to $\{\delta_i\}$ one can suppose that $\Delta_i=\delta_1
\cup \cdots\cup \delta_i$ is algebraically disk-busting for all $i$ not less than some fixed integer $i_0>0$.
If necessarily, by slightly deforming the hyperbolic metric in a small neighborhood of $\bigcup_i \delta_i$ in 
$N$ we may assume that the closed geodesics $\delta_i$ are simple and mutually disjoint, ie, each $\Delta_i$ 
is a link in $N$.
In fact, the resulting metric is no longer hyperbolic but pinched negatively curved.
However, all the results concerting hyperbolic manifolds which we need, eg \fullref{p_1} in 
\fullref{S2}, still hold under this metric if we replace $\mathrm{CAT}(-1)$ with $\mathrm{CAT}(-a^2)$, 
where $-a^2$ is the supremum of sectional curvatures of $N$ with respect to the new metric.
We refer to Canary \cite[Sections 4 and 5]{can} for standard arguments on such a metric deformation.

For any $i\geq i_0$, let $V_i$ be an end reduction of $N$ at $\Delta_i$.
By \cite{my}, the $\pi_1$--homomorphism induced from the inclusion $V_i\longrightarrow N$ is an isomorphism.
It follows that a compact core $C_i$ of $V_i$ is also a compact core of $N$, and each $\delta_k$ $(k=1,\ldots,i)$ 
is freely homotopic in $V_i$ to a loop of $C_i$.
By property (ii) of the end reduction $V_i$, there exists a regular submanifold $W_i$ of $V_i$ containing 
both $C_i$ and the traces of these free homotopies in $C_i$ and such that $\partial W_i$ is incompressible in 
$N\setminus \Delta_i$.
In fact, $R_n$ in \fullref{S3} with sufficiently large $n$ satisfies the properties of $W_i$.
Since the inclusion $C_i\longrightarrow W_i\longrightarrow N$ is $\pi_1$--isomorphic, $\pi_1(C_i)$ can be regarded 
as a subgroup of $\pi_1(W_i)$.
Consider the covering $p_i\co X_i\longrightarrow W_i$ associated to $\pi_1(C_i)\subset \pi_1(W_i)$.
Let $\hat\delta_k$ be a component of $p^{-1}_i(\delta_k)$ such that $p_i|\hat \delta_k\co \hat\delta_k\longrightarrow 
\delta_k$ is homeomorphic, and let $\hat \Delta_i=\hat\delta_1\cup\cdots\cup\hat\delta_i$.
The component $S_i$ of $\partial C_i$ facing $\mathcal{E}$ has a lift $S_i^\bullet$ to $X_i$ which is also a 
boundary component of a compact core of $X_i$.

\begin{lemma}\label{Tame}
$X_i$ is topologically tame.
\end{lemma}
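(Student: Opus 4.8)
The plan is to show that $X_i$ is topologically tame by exhibiting a suitable $\mathrm{CAT}(-1)$--surface obtained from Proposition \ref{p_1} and then invoking the structure of the compact core $C_i$. First I would set up the covering data: by construction $X_i \longrightarrow W_i$ is the covering associated to $\pi_1(C_i) \subset \pi_1(W_i)$, and $C_i$ is a compression body whose boundary component $S_i$ faces $\mathcal{E}$. The lift $S_i^\bullet$ is a boundary component of a compact core of $X_i$. Since $C_i$ is a compression body, the surface $S_i$ (of genus $m > 1$) carries a $2$--incompressible map into $X_i$ rel.\ $\hat\Delta_i$: the algebraically disk-busting property of $\Delta_i$, together with the fact that $[\delta_k]$ form a basis of $H_1(\tilde C;\mathbf{Z})$ as in \fullref{S3}, rules out compressions and meridional homotopies. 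Thus the hypotheses (i)--(iii) preceding Proposition \ref{p_1} are met with $\Sigma = S_i$, $W = W_i$, $X = X_i$, $\Delta = \Delta_i$, and $\hat\Delta = \hat\Delta_i$.

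Next I would apply Proposition \ref{p_1} to produce, for each $i$, a $\mathrm{CAT}(-1)$--ruled wrapping $g_i\co \Sigma \longrightarrow \overline{Z_i}$ of $\hat\Delta_i$ homotopic to $f_i$, where $Z_i = Y_i \setminus \hat\Delta_i$ is the incomplete manifold attached to the covering data. The key geometric point is that such a ruled wrapping, being $\mathrm{CAT}(-1)$, has controlled intrinsic geometry: its area is bounded by Gauss--Bonnet in terms of the genus $m$ alone, independent of $i$. Because the geodesics $\delta_k$ exit the end $\mathcal{E}$, the surfaces $g_i(\Sigma)$ are forced to penetrate arbitrarily deeply into $\mathcal{E}$ while remaining area-bounded. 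This is exactly the configuration that separates the core from the end and yields an exhaustion.

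The main step — and the anticipated obstacle — is to convert this sequence of area-bounded $\mathrm{CAT}(-1)$--surfaces into a genuine product-structure proof of tameness. Here I would follow the Calegari--Gabai scheme: the wrappings $g_i(\Sigma)$, each homotopic to $S_i^\bullet$ and each trapping one more geodesic as $i$ grows, give a sequence of surfaces marching out the end $\tilde{\mathcal{E}}$. The standard argument then shows that the region between consecutive wrappings is a product $\Sigma \times [0,1]$, so a closed neighborhood of the end is homeomorphic to $\Sigma \times [0,\infty)$; since the genus is bounded, only finitely many distinct surface types occur and one extracts an honest exhaustion. The delicate part is ensuring the surfaces are \emph{eventually embedded} (or can be replaced by embedded ones in the relevant homotopy class) and that the intermediate regions are genuinely products rather than having hidden topology — this is where the $2$--incompressibility rel.\ $\hat\Delta_i$, the engulfing property of the end reduction $V_i$, and the uniform area bound must be combined carefully. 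Granting this, $X_i$ is topologically tame.
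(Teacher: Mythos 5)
Your proposal goes in a fundamentally different direction from the paper, and it does not close. The paper's proof of this lemma is purely topological and makes no use of ruled wrappings at all: it observes (via the claim on page 426 of Calegari--Gabai) that $W_i$ is atoroidal with a boundary component of genus greater than $1$, so that Canary's covering theorem applies to the cover $X_i\longrightarrow W_i$ associated to the finitely generated group $\pi_1(C_i)$; alternatively, it cuts $W_i$ along a maximal collection of incompressible tori, shows the pieces of $p_i^{-1}(\mathcal{A})$ are simply connected (hence tame by Waldhausen), shows the remaining pieces cover atoroidal Haken manifolds with higher-genus boundary (hence tame by Canary, with finite generation supplied by Grushko), and reassembles via Simon's Combination Theorem. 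The $\mathrm{CAT}(-1)$ machinery only enters afterwards, in the proof of the main theorem.

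Your route has two concrete problems beyond being a different (and much harder) strategy. First, it is circular relative to the paper's architecture: the tameness of $X_i$ is exactly what the paper uses to produce the compactification $\overline X_i$ and the $2$--incompressible surface $\hat S_i$ that is fed into \fullref{p_1}; you cannot assume that surface exists in the required form while proving this lemma. Second, and more seriously, for a \emph{fixed} $i$ Proposition \ref{p_1} yields a \emph{single} ruled wrapping homotopic to $S_i^\bullet$, not a sequence of surfaces exiting the end of $X_i$. The exiting sequence $\{\Sigma_i\}$ in the paper arises only by varying $i$ and letting the geodesics $\delta_i$ march out the end of $N$; there is no analogous sequence inside one $X_i$, so the ``surfaces marching out the end'' picture you invoke is not available, and the step you flag as delicate (embeddedness and product regions between consecutive wrappings) is precisely the part of Calegari--Gabai that this paper is structured to avoid. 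To prove the lemma you should abandon the geometric approach and argue topologically from the covering $p_i\co X_i\longrightarrow W_i$ as above.
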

\begin{proof}
The claim in \cite[page 426]{cg} shows that $W_i$ is an atoroidal manifold such that $\partial W_i$ has a component 
of genus greater than $1$.
This fact together with Canary \cite{can} proves that $X_i$ is topologically tame.

Here, we will give an another proof without invoking the atoroidality of $W_i$.
First we outline the proof.
Divide the covering $p_i\co X_i\longrightarrow W_i$ to restricted coverings associated with a torus decomposition 
of $W_i$.
By finite generation, all but finitely many restrictions are universal coverings.
By a result in Waldhausen \cite{wa}, the total spaces of such universal coverings are topologically tame.
Other total spaces cover atoroidal Haken manifolds with non-torus boundary component, and hence they are 
topologically tame.
Since $X_i$ is obtained by attaching topologically tame manifolds to each other along simply connected boundary 
components, Simon's Combination Theorem \cite{sim} shows that $X_i$ is also topologically tame.

More precisely, let $\mathcal{T}_i=T_1\cup\cdots\cup T_m$ be a maximal union of mutually disjoint and nonparallel 
incompressible tori in $\Int W_i$.
Since $W_i$ is regular and $N$ is atoroidal and irreducible, each $T_j$ bounds a compact manifold $A_j$ which is 
contained in a $3$--ball in $N$ and homeomorphic to the exterior of a nontrivial knot in $S^3$.
Either any two $A_j$ are mutually disjoint or one of them contains the other.
Set $\mathcal{A}=A_1\cup\cdots\cup A_m$.
Note that, for any component $\hat A$ of $p_i^{-1}(\mathcal{A})$, the image $\mathrm{inc}_i\circ p_i(\hat A)$ 
is contained in a $3$--ball in $N$, where $\mathrm{inc}_i\co W_i\longrightarrow N$ is the inclusion.
Since $\mathrm{inc}_i\circ p_i\co X_i\longrightarrow N$ is $\pi_1$--isomorphic, it follows that $\hat A$ is simply 
connected.
Then $\hat A$ is topologically tame by \cite[Theorem 8.1]{wa}.
Since each component of $\partial \hat A$ is simply connected, $p_i^{-1}(\partial \mathcal{A})$ induces a 
free decomposition of $\pi_1(X_i)$.
The classical Grushko Theorem implies that the fundamental group of any component $\hat B$ of $
p_i^{-1}(W_i\setminus \Int \mathcal{A})$ is finitely generated.
Since $W_i\setminus \Int \mathcal{A}$ is an atoroidal Haken manifold such that one of the boundary 
components has genus greater than $1$, $\Int \hat B$ is topologically tame by \cite[Proposition 3.2]{can}.
In the present case, it is not hard to show that $\hat B$ is also topologically tame.
See Soma \cite{so} for a more general case.
Finally, Theorem 3.1 in \cite{sim} implies that $X_i$ is topologically tame.
\end{proof}

\begin{proof}[Proof of \fullref{t_2}]
By \fullref{Tame}, $X_i$ is realized as the interior of a compact manifold $\overline X_i$.
Let $\overline S_i$ be the component of $\partial \overline X_i$ facing $S_i^\bullet$ in $X_i$, 
and $\hat S_i$ a closed surface in $\Int X_i$ obtained by a small isotopy of $\overline S_i$ in 
$\overline X_i$.
We show that $\hat S_i$ is $2$--incompressible in $X_i$ rel.\ $\hat \Delta_i$.
If not, there would exist a compressing disk $D$ for $\overline S_i$ in $\overline X_i$ such that the 
intersection $D\cap \hat \Delta_i$ consists of at most one point.
If $D$ separates $X_i$, then $\pi_1(X_i)$ has a nontrivial free decomposition $A*B$ each factor of which is 
isomorphic to the fundamental group of a component of $X_i\setminus D$.
Otherwise, $\pi_1(X_i)$ is isomorphic to $A*\mathbf{Z}$ with $A=\pi_1(X_i\setminus D)$.
When $D\cap \hat \Delta_i=\emptyset$, any element of $\pi_1(X_i)$ represented by a component of $\hat \Delta_i$ is 
conjugate into one of the factors.
When $D\cap \hat \delta_j\neq \emptyset$ for some component $\hat \delta_j$ of $\hat \Delta_i$, one can suppose 
that the cyclic factor $\mathbf{Z}$ is generated by the element represented by $\hat \delta_j$.
Any element represented by a component of $\hat \Delta_i\setminus \hat\delta_j$ is conjugate into $A$.
It follows that $\hat \Delta_i$ is not algebraically disk-busting in $X_i$.
Since $(\mathrm{inc}_i\circ p_i)_*\co \pi_1(X_i)\longrightarrow \pi_1(N)$ is isomorphic and 
$\mathrm{inc}_i\circ p_i(\hat \Delta_i)= \Delta_i$, the link $\Delta_i$ would not be algebraically 
disk-busting in $N$, a contradiction.
One can show similarly that $\hat S_i$ is $2$--incompressible in $X_i$ also rel.\ $\hat \Delta_{i;\,i_0}=
\hat \Delta_i\cap p_i^{-1}(\Delta_{i_0})$.

For any $i\geq i_0$, let $\bar q_i\co \overline Z_i\longrightarrow N$ be the locally pathwise isometric map 
extending the covering $q_i\co Y_i^\circ \longrightarrow N\setminus \Delta_i$ given in \fullref{S2} which satisfies 
$q_i=p_i$ on $X_i\setminus p_{\smash{i}}^{-1}(\Delta_i)$.
Note that $\overline Z_i$ is the metric completion of $Z_i=X_i\cup Y_i^\circ \setminus \hat \Delta_i$.
By \fullref{p_1}, $\hat S_i$ is homotopic in $\overline Z_i$ to a $\mathrm{CAT}(-a^2)$--ruled 
wrapping $\hat\Sigma_i$ without crossing $\hat\Delta_i$.
The image $\Sigma_i=\bar q_i(\hat\Sigma_i)$ is also a $\mathrm{CAT}(-a^2)$--surface homotopic in $N$ to $S_i$.

Since the components $S_i$ of $\partial C_i$ are homeomorphic to each other, all $\hat \Sigma_i$ are closed 
surfaces of the same genus.
Fix an $\varepsilon >0$ less than the Margulis constant for $N$ and so that 
$\mathcal{N}_{2\varepsilon}(\Delta_{\smash{i_0}},N)$ is a tubular neighborhood of $\Delta_{\smash{i_0}}$ in $N$.
Let $l$ be any simple noncontractible loop in $\hat \Sigma_i$ of length less than $\varepsilon$.
If $l$ were contractible in $\overline Z_i$, then $l$ would either bound a disk in $\overline Z_i$ disjoint 
from $\hat \Delta_{i;\,i_0}$ or be contained in $\mathcal{N}_{2\varepsilon}(\hat \Delta_{i;\,i_0},\overline Z_i)
\setminus \hat\Delta_{i;\,i_0}$.
In either case, this contradicts that $\hat \Sigma_i$ is $2$--incompressible rel.\ $\hat \Delta_{i;\,i_0}$.
Thus $l$ is not contractible in $\overline Z_i$, and hence $\bar q_i(l)$ is contained in the 
$\varepsilon$--thin part $N_{\mathrm{thin}(\varepsilon)}$ of $N$.
From this fact together with Bounded Diameter Lemma \cite[Lemma 1.15]{cg} for $\mathrm{CAT}(-a^2)$--surfaces, 
we know that the diameter of any component of $\Sigma_i\setminus N_{\mathrm{thin}(\varepsilon)}$ is less than 
a constant independent of $i$.
Let $\hat \alpha_i$ be a ray in $\overline Z_i$ beginning at $\smash{\hat \delta}_i$ and covering a proper ray 
$\alpha_i$ in $N$ such that the sequence $\{\alpha_i\}$ exits $\mathcal{E}$.
Since the algebraic intersection number of $\hat\alpha_i$ with $\hat S_i$ is one, $\hat\Sigma_i\cap \hat\alpha_i$ 
and hence $\Sigma_i\cap \alpha_i$ are not empty.
This shows that $\{\Sigma_i\}$ exits $\mathcal{E}$.
Under the present situation, the tameness of $\mathcal{E}$ is proved by standard arguments in hyperbolic geometry, 
for example see \cite{bo,can,sou} or Tameness Criteria in \cite[\S 6]{cg}.
However, the case when $C_i$ is a handlebody is exceptional.
As is pointed out in the paragraph following the statement of Theorem 3 in Souto \cite{sou}, we need furthermore 
to show that $[\Sigma_i]\neq 0$ in $H_2(N\setminus \delta_1;\mathbf{Z})$ for guaranteeing that any 
surface homologous to $\Sigma_i$ in $N\setminus C$ excises the end $\mathcal{E}$ from $N$, where $C$ is a compact 
core of $N$ with $\Int C\supset \delta_1$ and $C\cap \Sigma_i=\emptyset$.

Suppose that a compact core of $N$ and hence any $\overline X_i$ are handlebodies.
Let $J$ be a tubular neighborhood of $\delta_1$ in $N$ and $\hat J$ 
a lift of $J$ to $X_i$ containing $\hat \delta_1$.
For $i$ sufficiently large, $J\cap \Sigma_i$ is empty and hence $\bar q_i^{-1}(J)\cap 
\hat \Sigma_i \subset \bar q_i^{-1}(J\cap \Sigma_i)=\emptyset$.
The closure $K$ of the union of bounded components of $\bar Z_i\setminus (\hat \Sigma_i\cup \hat J)$ is compact.
As $\bar q_i^{-1}(\delta_1)$ is disjoint from $\hat\Sigma_i\cup \partial \hat J\supset \partial K$, the preimage
$\bar q_i^{-1}(\delta_1)$ contains no line components meeting $K$ nontrivially.
Since $\bar q_i^{-1}(\delta_1)\setminus p_i^{-1}(\delta_1)\subset \partial \overline Z_i$, if  
$\bar q_i^{-1}(\delta_1)\cap K$ were nonempty, then each component $\smash{\hat\delta_1'}$ of the intersection would be  
a loop component of $p_i^{-1}(\delta_1)$. 
Since $(\mathrm{inc}_i\circ p_i)_*\co \pi_1(X_i) \longrightarrow \pi_1(N)$ is an isomorphism, 
$\hat \delta_1'$ is freely homotopic in $X_i\subset \overline Z_i$ to $\smash{\hat \delta_1}$ up to multiplicity.
Then there would exist a $\mathrm{CAT}(-a^2)$--piecewise ruled annulus in $\overline Z_i$ with the geodesic boundary 
$\hat\delta_1\cup \hat \delta_1'$, which is a contradiction by the Gauss--Bonnet Theorem.
It follows that $\bar q_i^{-1}(\delta_1)\cap K=\emptyset$,
and so $[\partial \smash{\hat J}]+[\smash{\hat \Sigma}_i]=0$ in $H_2(\overline Z_i\setminus 
\bar q_i^{-1}(\delta_1);\mathbf{Z})$.
This shows that $[\Sigma_i]=-[\partial J]\neq 0$ in $H_2(N\setminus \delta_1;\mathbf{Z})$.
\end{proof}

\bibliographystyle{gtart}
\bibliography{link}

\end{document}